\newtheorem{prethm}{{\bf Theorem}}
\newenvironment{thm}{\begin{prethm}{\hspace{-0.5
               em}{\bf.}}}{\end{prethm}}
\newtheorem{prepro}[prethm]{Proposition}
\newtheorem{prelem}[prethm]{Lemma}
\newtheorem{precor}[prethm]{Corollary}
\newenvironment{cor}{\begin{precor}{\hspace{-0.5
               em}{\bf.}}}{\end{precor}}
\newtheorem{prerem}[prethm]{{\bf Remark}}
\newenvironment{rem}{\begin{prerem}\em{\hspace{-0.5
              em}{\bf.}}}{\end{prerem}}
\newtheorem{preexample}{{\bf Example}}
\newtheorem{preproof}{{\bf Proof.}}
\newenvironment{proof}[1]{\begin{preproof}{\rm
               #1}\hfill{$\Box$}}{\end{preproof}}
\newcommand{\M}{{\cal M}}
\newcommand{\x}{{\bf x}}
\newcommand{\y}{{\bf y}}
\newcommand{\0}{{\bf 0}}
\renewcommand{\thefootnote}
\title{Inverses of triangular matrices and bipartite graphs}
\author{ R.B. Bapat$^{\,\rm a}$ \quad  E. Ghorbani$^{\,\rm b,c}$ \\ [.3cm]
{\small $^{\rm a}$Indian Statistical Institute, Delhi Centre, 7 S.J.S.S. Marg,}\\ {\small New Delhi 110 016, India}\\
{\small $^{\rm b}$Department of Mathematics, K.N. Toosi University of Technology,}\\
{\small P. O. Box 16315-1618, Tehran, Iran}\\
{\small $^{\rm c}$School of Mathematics, Institute for Research in Fundamental
Sciences (IPM),}\\{\small P.O. Box
19395-5746, Tehran, Iran }
\\[.3cm]{
$\mathsf{rbb@isid.ac.in}$ \quad  $\mathsf{e\_ghorbani@ipm.ir}$ }}
\date{}
\begin{document}
\maketitle

\begin{abstract}
To a given nonsingular triangular matrix $A$
 with entries from a ring, we associate a weighted
 bipartite graph $G(A)$ and give a combinatorial description of the inverse of $A$ by
 employing paths in  $G(A)$.
Under a certain condition, nonsingular triangular
matrices $A$ such that $A$ and $A^{-1}$ have the same zero-nonzero pattern are characterized.
 A combinatorial construction is given to construct outer inverses of the adjacency matrix
of a weighted tree.

\vspace{3mm}
\noindent {\em AMS Classification}: 05C50, 15A09, 05C05\\
\noindent{\em Keywords}: Triangular matrix, bipartite graph, perfect matching, tree, outer inverse
\end{abstract}

\section{Introduction}

Let $A$ be a lower triangular matrix with entries from a ring,
which is not necessarily commutative. In the first section of this paper we obtain a combinatorial
formula for $A^{-1},$ when it exists. The formula is in terms of
certain paths in the bipartite graph associated with $A.$ We note
some consequences of this formula which include expressions
for the inverse of a block triangular matrix and a formula for the
inverse of the adjacency matrix of a bipartite graph with a
unique perfect matching.

In Section 3 we consider lower triangular, invertible, nonnegative
matrices $A$ and characterize those such that $A$ and $A^{-1}$
have the same zero-nonzero pattern. This relates to a question posed by Godsil \cite{g} for bipartite graphs.
In the final section we provide a combinatorial construction
of outer inverses of the adjacency matrix of a weighted tree.

\section{Inverses of triangular matrices}

Let $G$ be a bipartite graph and let ${\cal M}$ be  a matching in $G.$
We assume that each edge $e$ of $G$ has a nonzero weight $w(e)$ from a ring (not necessarily commutative).
A path in $G$ is said to be {\em alternating} if the edges are alternately
in ${\cal M}$ and ${\cal M}^c,$ with the first and the last edges being in ${\cal M}.$
A path with only one edge, the edge being in ${\cal M},$
is alternating.
Let $P$ be the alternating path consisting of the edges $e_1,e_2, \ldots,e_k$ in that order.
The {\em weight} $w(P)$ of $P$ is defined to be
$w(e_1)^{-1}w(e_2)w(e_3)^{-1}\cdots w(e_{k-1})w(e_k)^{-1},$
assuming that the inverses exist. Thus, if the weights
commute, then $w(P)$ is just the
product of the weights of the edges in $P \cap {\cal M}^c$ divided by
the product of the weights of the edges in $P \cap {\cal M}.$
The length $\ell(P)$ of $P$ is the number of edges
on that.
For an alternating path $P,$ we define $$\epsilon (P) = (-1)^{
(\ell(P)-1)/2}.$$

Let $A$ be an $n \times n$ matrix with entries from a ring. We associate a bipartite
graph $G(A)$ with $A$ as usual: the vertex set is $\{R_1, \ldots, R_n\} \cup \{
C_1, \ldots, C_n\}$ and there is an edge $e$ between $R_i$ to $C_j$ if and only
if $a_{ij} \ne0,$ in which case we assign $e$ the weight $w(e)=a_{ij}$.
We write vectors as row vectors. The transpose of $\x$ is denoted $\x^\top.$

\begin{thm}\label{inv}
Let $A$ be a lower triangular $n \times n$ matrix with invertible diagonal
elements and ${\cal M}$ be the unique perfect
matching in $G(A)$ consisting of the edges from $R_i$ to $C_i$,
$i = 1, \ldots, n.$ Then the entries of $B = A^{-1}$, for $1 \le j \le i \le n$,  are given by
\begin{equation}\label{bij1}
b_{ij} = \sum_{P\in{\cal P}_{ij}} \epsilon (P) w(P),
\end{equation}
where ${\cal P}_{ij}$ is the set of alternating paths from $C_i$ to $R_j$
in $G(A).$
\end{thm}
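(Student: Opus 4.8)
The plan is to verify directly that $AB = I$, where $B$ is the matrix defined by the right-hand side of \eqref{bij1}. Since $A$ is assumed invertible, a one-sided identity $AB = I$ already forces $B = A^{-1}$ by associativity, via $B = (A^{-1}A)B = A^{-1}(AB) = A^{-1}$, so no separate check of $BA = I$ is needed. The engine of the argument is a close reading of what an alternating path from $C_i$ to $R_j$ looks like in $G(A)$.

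First I would analyze the structure of the paths in ${\cal P}_{ij}$. An alternating path must begin with a matching edge leaving $C_i$, and the only such edge is $C_iR_i$. From $R_i$ the next edge is a non-matching edge to some $C_{i_1}$, and since $A$ is lower triangular this forces $i_1 < i$. Iterating, every path in ${\cal P}_{ij}$ traces a strictly decreasing sequence $i = i_0 > i_1 > \cdots > i_m = j$, alternately using the matching edges $C_{i_s}R_{i_s}$ (weight $a_{i_s i_s}$) and the non-matching edges $R_{i_{s-1}}C_{i_s}$ (weight $a_{i_{s-1} i_s}$). In particular ${\cal P}_{ij} = \emptyset$ when $i < j$, so the formula returns $b_{ij}=0$ there, matching the fact that $A^{-1}$ is lower triangular; and ${\cal P}_{ii}$ contains only the single edge $C_iR_i$, giving $b_{ii} = a_{ii}^{-1}$.

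The key step is a recursive decomposition. I would split each path $P \in {\cal P}_{ij}$ with $i > j$ at its second edge: writing $i_1$ for the first index visited after $R_i$, the path $P$ is the concatenation of $C_i \to R_i \to C_{i_1}$ with an alternating path $Q \in {\cal P}_{i_1 j}$. The two initial edges contribute the left factor $a_{ii}^{-1} a_{i i_1}$ to the weight and lengthen the path by $2$, so $w(P) = a_{ii}^{-1} a_{i i_1} w(Q)$ and $\epsilon(P) = -\epsilon(Q)$. This is a bijection between ${\cal P}_{ij}$ and the union of the ${\cal P}_{i_1 j}$ over $j \le i_1 \le i-1$ (indices $i_1$ with $a_{i i_1}=0$ contribute vanishing terms), so summing over all paths yields the recurrence
\begin{equation*}
b_{ij} = -a_{ii}^{-1} \sum_{k=j}^{i-1} a_{ik}\, b_{kj}, \qquad i > j,
\end{equation*}
together with $b_{ii} = a_{ii}^{-1}$. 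Rearranged this reads $\sum_{k=j}^{i} a_{ik} b_{kj} = \delta_{ij}$ for all $i \ge j$, that is $(AB)_{ij}=\delta_{ij}$; combined with $b_{ij}=0$ for $i<j$ this gives $AB = I$, completing the argument.

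The point requiring the most care is the bookkeeping in the noncommutative ring: $w(P)$ is an ordered product read along the path, so the factors $a_{ii}^{-1}$ and $a_{i i_1}$ split off specifically on the \emph{left}, and it is exactly this placement that lets them be pulled out of the sum in the stated order. A secondary subtlety is confirming that the decomposition lands on genuine paths rather than walks, which is immediate since the indices strictly decrease and hence no vertex can repeat, and that the one-sided identity suffices, which holds because invertibility of $A$ is hypothesized.
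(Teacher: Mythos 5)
Your proof is correct, and its combinatorial heart is the same as the paper's: both arguments split an alternating path from $C_i$ to $R_j$ at its first two edges (the matching edge $C_iR_i$ followed by a non-matching edge $R_iC_{k}$), yielding exactly the weight and sign relations $w(P)=a_{ii}^{-1}a_{ik}w(Q)$ and $\epsilon(P)=-\epsilon(Q)$ that appear as (\ref{eq3}) in the paper. The difference is in the packaging. The paper runs an induction on $n$, partitions off the last row, and uses the block relation $\y A_{11}+b_{nn}\x=\0$ coming from $BA=I$ to identify the last row of $A^{-1}$ with the path sums; your version dispenses with the induction entirely, reads the recurrence $b_{ij}=-a_{ii}^{-1}\sum_{k=j}^{i-1}a_{ik}b_{kj}$ for all pairs $i>j$ at once, and recognizes it as the statement $(AB)_{ij}=\delta_{ij}$, after which invertibility of $A$ (which is part of the hypothesis, via the invertible diagonal) upgrades the one-sided identity to $B=A^{-1}$. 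What your organization buys is a slightly cleaner logical flow --- the path matrix is defined first and then verified to be a right inverse, with the noncommutative left-to-right ordering of the factors $a_{ii}^{-1}a_{ik}b_{kj}$ matching matrix multiplication exactly as needed --- while the paper's induction has the advantage of delivering the sub-matrix formula $B_{11}=A_{11}^{-1}$ as an intermediate statement that is reused elsewhere. Both proofs are complete; yours correctly handles the two points that actually require care, namely the left placement of the split-off factors in the noncommutative ring and the fact that the strictly decreasing index sequence makes the concatenation a genuine path.
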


\begin{proof}{ We prove the result by induction on $n,$ the cases $n =1,2$
being easy. Assume the result for matrices of order less than $n.$
Partition $A$ and $B$ as
$$A = \left(
  \begin{array}{cc}
    A_{11} & \0^\top \\
    \x & a_{nn} \\
  \end{array}
\right), \,\, B = \left(
  \begin{array}{cc}
    B_{11} & \0^\top \\
    \y & b_{nn} \\
  \end{array}
\right).$$
Note that $b_{nn} = a_{nn}^{-1}$ and $B_{11} = A_{11}^{-1}.$

By the induction assumption, (\ref{bij1}) holds for $1 \le j \le i \le n-1.$
Thus we need to verify (\ref{bij1}) for the pairs $(n,1), \ldots, (n,n-1).$

From $BA = I$ we see that $\y A_{11} + b_{nn}\x = \0$ and hence
$\y = -a_{nn}^{-1}\x A_{11}^{-1}.$ Therefore
\begin{equation} \label{eq2}
y_j = -a_{nn}^{-1}\sum_{i=1}^{n-1} x_i b_{ij}, ~~ j = 1, \ldots, n-1.
\end{equation}

Consider any alternating path from $C_n$ to $R_j$ in $G(A).$ Any such
path must be composed of the edge from $C_n$ to $R_n,$
followed by an edge from $R_n$ to $C_i$ for some $i \in
\{1, \ldots, n-1\},$ and then
an alternating path from $C_i$ to $R_j.$

If $P$ is an alternating path from $C_i$ to $R_j,$ then denote by $P'$
the alternating path from $C_n$ to $R_j$
obtained by concatenating the edge from $C_n$ to $R_n,$ then the edge
from $R_n$ to $C_i,$ followed by $P.$
Note that
\begin{equation} \label{eq3}
\epsilon(P') w(P') = -\epsilon (P) a_{nn}^{-1}x_i w(P).
\end{equation}

By the induction assumption, $b_{ij} = \sum \epsilon(P) w(P),$
where the summation is over all alternating paths from $C_i$ to $R_j.$
Hence it follows from (\ref{eq2}) and (\ref{eq3}) that for $j = 1, \ldots, n-1,$
$$b_{nj} = y_j =-a_{nn}^{-1}\sum_{i=1}^{n-1}x_ib_{ij}= -a_{nn}^{-1}\sum_{i=1}^{n-1}x_i\left(\sum_{P\in{\cal P}_{ij}} \epsilon (P) w(P)\right)=
\sum_{P\in{\cal P}_{nj}} \epsilon (P) w(P),$$
 completing the proof.
}\end{proof}

\vskip 1em

We note some consequences of Theorem \ref{inv}. Since the weights
are noncommutative, we may take the weights to be square matrices of a fixed order.
This leads to combinatorial formulas for inverses of block triangular matrices.
For example, the usual formula
$$\left(
    \begin{array}{cc}
      A & O \\
      C & B \\
    \end{array}
  \right)^{-1}
  = \left(
      \begin{array}{cc}
        A^{-1} & O \\
        -B^{-1}CA^{-1} & B^{-1} \\
      \end{array}
    \right)$$
    is a consequence of Theorem \ref{inv}. Another example is the identity
    $$\left(
        \begin{array}{cccc}
          A & O & O & O \\
          W & B & O & O \\
          X & O & C & O \\
          O & Y & Z & D \\
        \end{array}
      \right)^{-1}$$ $$ = \left(
        \begin{array}{cccc}
          A^{-1} & O & O & O \\
          -B^{-1}WA^{-1} & B^{-1} & O & O \\
          -C^{-1}XA^{-1} & O & C^{-1} & O \\
          D^{-1}YB^{-1}WA^{-1} + D^{-1}ZC^{-1}XA^{-1} & -D^{-1}YB^{-1} & -D^{-1}ZC^{-1} & D^{-1} \\
        \end{array}
      \right).$$

\vskip 1em

We note yet another consequence of Theorem \ref{inv}.
    Let GF(2) denote the Galois field of order $2.$ The following result easily
    follows from Theorem \ref{inv}.
\begin{cor} Let $A$ be an $n \times n$ lower triangular matrix over ${\rm GF(2)}$
     such that $a_{ii} = 1$, $i = 1, \ldots, n$; and let $B = A^{-1}.$
     Let $G(A)$ be the graph associated with $A.$ Then $b_{ij} = 1$
     if and only if there are an odd number of alternating paths from
    $C_i$ to $R_j$ in $G(A).$
\end{cor}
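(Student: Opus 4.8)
The plan is to obtain the statement as an immediate specialization of Theorem~\ref{inv}. First I would verify that the hypotheses apply: since every diagonal entry satisfies $a_{ii}=1$ and $1$ is invertible in ${\rm GF}(2)$, the matrix $A$ has invertible diagonal elements, so $G(A)$ carries the unique perfect matching ${\cal M}$ consisting of the edges $R_iC_i$, each of weight $1$. Hence for $1\le j\le i\le n$ the formula
\[
b_{ij}=\sum_{P\in{\cal P}_{ij}}\epsilon(P)\,w(P)
\]
is available, the sum running over all alternating paths from $C_i$ to $R_j$.

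The crux is to observe that over ${\rm GF}(2)$ both factors $w(P)$ and $\epsilon(P)$ collapse to $1$. Every edge of $G(A)$ has weight equal to a nonzero entry of $A$, hence weight $1$; since $1^{-1}=1$, the defining product $w(e_1)^{-1}w(e_2)\cdots w(e_k)^{-1}$ evaluates to $1$ for any alternating path $P$, so $w(P)=1$. Likewise $\epsilon(P)=(-1)^{(\ell(P)-1)/2}$ is a power of $-1$, and in ${\rm GF}(2)$ we have $-1=1$, so $\epsilon(P)=1$ regardless of the length of $P$. Consequently each summand $\epsilon(P)w(P)$ equals $1$.

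It then follows that $b_{ij}=\sum_{P\in{\cal P}_{ij}}1=|{\cal P}_{ij}|\pmod 2$, so $b_{ij}$ is the parity of the number of alternating paths from $C_i$ to $R_j$, and thus $b_{ij}=1$ exactly when this number is odd. This gives the claim for $j\le i$; for $j>i$ both sides vanish, since $B=A^{-1}$ is lower triangular and, as index labels strictly decrease along an alternating path started by the matching edge $C_iR_i$, there are no alternating paths from $C_i$ to $R_j$ at all. I do not anticipate any genuine obstacle: the entire content is the twofold collapse $w(P)=1$ and $\epsilon(P)=1$ in characteristic $2$, and the only point to record carefully is that the upper-triangular entries are handled vacuously.
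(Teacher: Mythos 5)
Your proposal is correct and matches the paper's intent exactly: the paper gives no explicit proof beyond asserting that the corollary ``easily follows'' from Theorem~\ref{inv}, and your argument supplies precisely the intended details (all weights and signs collapse to $1$ in ${\rm GF}(2)$, so $b_{ij}$ is the parity of $|{\cal P}_{ij}|$). Your additional remark handling the vacuous case $j>i$ is a correct and welcome bit of care.
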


If $A$ is a lower triangular matrix, then
    \begin{equation} \label{eq1} \left( \begin{array}{cc} O & A \\A^\top & O \\ \end{array}\right )\end{equation}
      is the (weighted) adjacency matrix of a bipartite graph with a unique perfect matching. Conversely
the adjacency matrix of a bipartite graph with a unique perfect matching can be put in
the form (\ref{eq1}) after a relabeling of the vertices.
In view of this observation, the unweighted case of Theorem~\ref{inv} can
be seen to be equivalent  to Lemma 2.1 of  Barik, Neumann and Pati \cite{bnp}. Our proof
technique is different. In the same spirit, Theorem~\ref{inv} leads to a formula
for the inverse of the adjacency matrix of a weighted tree (see Section~4) when the tree has a perfect
matching, generalizing a well-known result from \cite{bh,pk} (see also \cite[Section 3.6]{bapat}).

\begin{rem} Let $T$ be tree with nonsingular weighted adjacency matrix $A$.
Then $A^{-1}$ is the weighted adjacency matrix of a bipartite graph. The graphs that
can occur as inverses of nonsingular trees were characterized in  \cite{np}.
   Namely, a graph $G$ is the inverse of some tree if and only if $G\in{\cal F}_k$ where ${\cal F}_k$ is the family of graphs defined recursively as follows. Set ${\cal F}_1=\{P_2\}$ and for $k\ge2$ any $G\in{\cal F}_k$ is obtained from some $H\in{\cal F}_{k-1}$ by taking any vertex $u$ of $H$ and adding two new vertices $u'$ and $v$ where $u'$ is joined to all the neighbors of $u$ and $v$ (a pendant vertex) is joined to $u'$. The characterization remains valid in the more general setting when the weights of the edges come from a ring (provided the required inverses of the weights exist).
 \end{rem}
\section{Matrices with isomorphic inverses}

In this section we consider real matrices.
It is an interesting problem to determine the triangular matrices $A$ for which $G(A)$ is isomorphic to $G(A^{-1})$.
This problem is in close connection with the one posed by Godsil \cite{g} as described below.

Let $G$ be a bipartite graph on $2n$ vertices which has a unique perfect matching $\M$.
 Then there is a lower triangular matrix $A$ such that $G=G(A)$.
  With the additional
hypothesis that the graph $G/\M$, obtained from $G$ by contracting the edges
in $\M$, is bipartite, Godsil \cite{g}  showed that $A^{-1}$
is diagonally similar to a matrix $A^+$ whose entries are nonnegative and which
dominates $A$, that is $A^+(i,j)\ge A(i,j)$ for all $1\le i,j\le n$. In turn, $A^+$ can be
regarded as the adjacency matrix of a bipartite multigraph $G^+$ in which $G$ appears
as a subgraph. In this framework,  Godsil asked for a characterization of the graphs $G$ such that $G^+$ is isomorphic to $G$.
This was answered in \cite{sc}, by showing that $G$ and $G^+$ are isomorphic if and only if $G$ is a corona of bipartite graph. The {\em corona} of a graph
is obtained by creating a new vertex $v'$ for each vertex $v$ such that $v'$
is adjacent to $v$. The following theorem is a generalization of this result.

\begin{thm} Let $A$ be a lower triangular matrix with nonnegative entries, $\M$ being the unique matching of $G=G(A)$ and such that
$G/\M$ is bipartite. Then $A$ and $A^{-1}$ have the same zero-nonzero pattern if and only if $G$ is a corona of a bipartite graph.
\end{thm}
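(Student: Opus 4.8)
The plan is to read off the zero--nonzero pattern of $A^{-1}$ from the path formula of Theorem~\ref{inv}, to show that under the bipartiteness hypothesis no cancellation can occur in (\ref{bij1}), and thereby to translate ``same pattern'' into a purely combinatorial statement about the digraph $D$ whose arcs $i\to j$ (for $i>j$) record the nonzero subdiagonal entries $a_{ij}$. First I would record the bijection between $\M$-alternating paths from $C_i$ to $R_j$ and directed paths $i=k_0\to k_1\to\cdots\to k_m=j$ in $D$: such a path must begin with the matching edge $C_iR_i$, leave $R_i$ along a non-matching edge $R_iC_{k_1}$ (so $a_{ik_1}\neq0$), and so on, ending with $C_jR_j$. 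Its length is $\ell=2m+1$, so $\epsilon(P)=(-1)^m$.

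Since $G/\M$ is bipartite, there is a proper $2$-colouring $\sigma\colon\{1,\dots,n\}\to\{+1,-1\}$ of the contracted vertices, and every arc of $D$ joins opposite colours; hence the colours alternate along any such path and $(-1)^m=\sigma(i)\sigma(j)$, depending only on $i,j$. As the entries of $A$ are nonnegative, each weight $w(P)$ is positive, so (\ref{bij1}) gives
$$b_{ij}=\sigma(i)\sigma(j)\sum_{P\in{\cal P}_{ij}}w(P),$$
whence $b_{ij}\neq0$ if and only if ${\cal P}_{ij}\neq\emptyset$, i.e. if and only if $j$ is reachable from $i$ in $D$. (This is exactly the assertion that $A^{-1}$ is diagonally similar to Godsil's nonnegative $A^+$, so one could instead quote that fact.) Because $A$ and $A^{-1}$ are both lower triangular with nonzero diagonal, they have the same pattern precisely when, for every $i>j$, reachability $i\rightsquigarrow j$ forces the arc $i\to j$; that is, precisely when $D$ is transitive.

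It remains to match this with the corona condition, which I expect to carry the real content. I would show that $G$ is a corona of a bipartite graph if and only if $D$ has no directed path of length two: designating for each $i$ the pendant of the matching edge $R_iC_i$ amounts to labelling each index as type $R$ or type $C$ so that every non-matching edge $R_iC_j$ avoids the pendants, i.e. $i$ has type $C$ and $j$ has type $R$; removing the pendants then leaves the bipartite graph $H$ with $G=H\circ K_1$. Such a labelling exists exactly when no index simultaneously carries an out-arc and an in-arc, which is exactly the absence of a length-two path $i\to k\to j$.

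Finally I would reconcile the two conditions using bipartiteness. If $D$ has no path of length two, transitivity holds vacuously, so the corona case yields the same pattern. Conversely, if $D$ is transitive but contained a path $i\to k\to j$, then transitivity would supply $i\to j$, and the three non-matching edges $R_iC_k$, $R_kC_j$, $R_iC_j$ would contract to a triangle $v_iv_kv_j$ in $G/\M$, contradicting its bipartiteness; hence a transitive $D$ has no length-two path and $G$ is a corona. The main obstacle is thus not the algebra but isolating the no-cancellation phenomenon and the corona characterisation; once both are in place, this triangle argument closes the equivalence.
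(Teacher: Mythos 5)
Your proposal is correct and follows essentially the same route as the paper: both hinge on the observation that bipartiteness of $G/\M$ forces every alternating path between a fixed pair of vertices to carry the same sign $\epsilon(P)$ (so nonnegativity of $A$ rules out cancellation in the formula of Theorem~\ref{inv}), and on the fact that a long alternating path together with the corresponding nonzero entry of $A$ would create a triangle in $G/\M$. Your digraph repackaging (same pattern $\Leftrightarrow$ transitivity, corona $\Leftrightarrow$ no directed path of length two) and your derivation of the ``if'' direction from vacuous transitivity, in place of the paper's direct $2\times 2$ block computation, are cosmetic variations on the same argument.
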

\begin{proof}{If $G$ is a corona, by some rearranging, we may write $A$ as
$$A=\left(
      \begin{array}{cc}
        I & O \\
        A_0 & I \\
      \end{array}
    \right),$$
for some $A_0$. Hence $$A^{-1}=\left(
      \begin{array}{cc}
        I & O \\
        -A_0 & I \\
      \end{array}
    \right),$$
proving the `if' part of the theorem.

Next, assume that  $A$ and $A^{-1}$ have the same zero-nonzero pattern. To show that $G$ is a corona, it suffices to prove that the alternating paths of $G$ are of length at most $3$. By contradiction, suppose that $G$ has an alternating path of length larger than $3$ and so it has an alternating path of \underline{length 5} between $R_j$ and $C_i$, say. Since $G/\M$ is bipartite, all the alternating paths between $R_j$ and $C_i$  must have the same length mod 4 (note that two alternating paths with different lengths mod 4 between two vertices  give rise to an odd cycle in $G/\M$).
So, by Theorem~\ref{inv}, the $(i,j)$ entry of $A^{-1}$ is nonzero. Since $A$ and $A^{-1}$ have the same zero-nonzero pattern,
the $(i,j)$ entry of $A$ is nonzero and hence $R_j$ and $C_i$ are adjacent.
This implies the existence of a triangle in $G/\M$, a contradiction.
}\end{proof}

\section{Generalized inverses and matchings}

Let $A$ be an $m\times n$ matrix with entries from a ring such that $T=G(A)$ is a tree and let $\M$ be a matching
 in $T$. When $\M$ is perfect, $A$ is nonsingular and a formula for
 $A^{-1}$ may be given in terms of alternating paths, as noted at the end of Section~2.
When $\M$ is not perfect, we still may define an  $n\times m$ matrix $B=(b_{ij})$ using the alternating paths
of $\M$ in the same fashion as when $\M$ is a perfect matching.
More precisely, if $\{R_1,\ldots,R_m\}$ and $\{C_1,\ldots,C_n\}$ are color classes of $T$, then
for $1 \le j \le i \le n$,
$$b_{ij} = \sum \epsilon (P) w(P),$$
where the summation is over all alternating paths $P$ from $C_i$ to $R_j$
in $G(A)$.  We call such a matrix the {\em path matrix of $T$ with respect to $\M$}.
We show that the path matrix turns out be an outer inverse of the adjacency matrix.

\begin{thm}\label{bab} Let $A$ be an $m\times n$ matrix such that $T=G(A)$ is a tree and let $\M_1$ and $\M_2$ be two matchings in $T$ with $\M_2\subseteq\M_1$.
 Let $B_1$ and $B_2$ be $n\times m$ path matrices of $T$ with respect to $\M_1$ and $\M_2$, respectively. Then
$$B_1AB_2=B_2AB_1=B_2.$$
\end{thm}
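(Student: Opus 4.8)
The plan is to reduce the identity to a pair of block relations satisfied by an ordinary matrix inverse, and the first step is a structural description of a single path matrix. For a matching $\M$ in the tree $T$, every $\M$-alternating path has all of its vertices saturated by $\M$; hence if $B$ is the path matrix of $T$ with respect to $\M$, the entry $b_{ij}$ can be nonzero only when both $C_i$ and $R_j$ are saturated. Writing $S_R,S_C$ for the $R$- and $C$-vertices saturated by $\M$, the matching is a bijection between them, so the submatrix $A_\M:=A[S_R,S_C]$ is square, and its bipartite graph is the subforest $T[S_R\cup S_C]$ carrying the perfect matching $\M$. After relabelling, $A_\M$ is lower triangular with the $\M$-edges on the diagonal, so it is invertible (the diagonal weights are weights of $\M$-edges, hence invertible by hypothesis). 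Because $T$ is a tree there is at most one path between two vertices, and any $\M$-alternating $C_i$--$R_j$ path lies entirely in $T[S_R\cup S_C]$; thus the alternating paths used to define $B$ are exactly those seen by $A_\M$, and Theorem~\ref{inv} applied to $A_\M$ identifies the $(C_i,R_j)$ entry of $A_\M^{-1}$ with $b_{ij}$. I would record this as a lemma: $B$ is supported on the $S_C\times S_R$ block, where it equals $A_\M^{-1}$, and is zero elsewhere.

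With this lemma I would use the hypothesis $\M_2\subseteq\M_1$ to align the two matched submatrices. Let $S^t_R,S^t_C$ denote the vertices saturated by $\M_t$; then $S^2_R\subseteq S^1_R$ and $S^2_C\subseteq S^1_C$. Put $D_R=S^1_R\setminus S^2_R$ and $D_C=S^1_C\setminus S^2_C$, and order the rows as $S^2_R$, then $D_R$, then the $\M_1$-unsaturated rows, and similarly for the columns. In this order $A_1:=A[S^1_R,S^1_C]$ becomes $\begin{pmatrix}P & Q_0\\ R_0 & N\end{pmatrix}$ with $P=A[S^2_R,S^2_C]=A_2$, $Q_0=A[S^2_R,D_C]$, $R_0=A[D_R,S^2_C]$, so the leading block is precisely the matched submatrix of $\M_2$. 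Writing $A_1^{-1}=\begin{pmatrix}E & F\\ G & H\end{pmatrix}$ conformally, the lemma says $B_1$ is the embedding of $A_1^{-1}$ and $B_2$ is the embedding of $P^{-1}$.

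It then remains to multiply out the two products blockwise. Since $B_2$ is supported on the $S^2$ block, $AB_2$ has nonzero columns only on $S^2_R$, and a short computation gives $B_1AB_2$ supported on $(S^2_C\cup D_C)\times S^2_R$ with entries $E+F R_0 P^{-1}$ on $S^2_C$ and $G+H R_0 P^{-1}$ on $D_C$; requiring these to equal $P^{-1}$ and $0$ becomes, after right multiplication by $P$, the relations $EP+FR_0=I$ and $GP+HR_0=0$, which are two blocks of $A_1^{-1}A_1=I$. Symmetrically, $B_2AB_1$ is supported on $S^2_C\times(S^2_R\cup D_R)$ with entries $E+P^{-1}Q_0 G$ and $F+P^{-1}Q_0 H$, which after left multiplication by $P$ reduce to $PE+Q_0 G=I$ and $PF+Q_0 H=0$, two blocks of $A_1A_1^{-1}=I$. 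In both cases the result is the embedding of $P^{-1}=A_2^{-1}$, namely $B_2$.

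The conceptual heart, and the step I expect to be the main obstacle, is the reduction lemma of the first paragraph: one must check carefully that the alternating paths of $T$ between saturated vertices coincide with those of the subforest and that Theorem~\ref{inv} applies to $A_\M$ after relabelling. The remaining block algebra is routine, but because the ring is noncommutative I would track the side of every multiplication and rely on both $A_1A_1^{-1}=I$ and $A_1^{-1}A_1=I$, together with $P^{-1}P=PP^{-1}=I$, rather than on any commutativity.
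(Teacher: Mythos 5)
Your proposal is correct, and its first stage coincides with the paper's: both identify the path matrix of a matching $\M$ with the inverse of the matched submatrix $A_\M$ (the subforest on the $\M$-saturated vertices, triangularized by a relabelling) embedded in a zero matrix --- the paper simply asserts this with ``it turns out that, with an appropriate ordering of the vertices, \dots'', while you supply the justification (alternating paths stay inside the saturated subforest, a forest has at most one perfect matching, Theorem~\ref{inv} applies after relabelling). The genuine divergence is in the block algebra. The paper's computation rests on a further structural claim, namely that $A_2^{-1}$ occurs as the leading block of $A_1^{-1}$ with zero blocks to its right (in your notation, $E=P^{-1}$ and $F=O$), and it disposes of $B_1AB_2=B_2$ with ``proved similarly''. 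You use neither: you extract $EP+FR_0=I$ and $GP+HR_0=O$ from $A_1^{-1}A_1=I$, and $PE+Q_0G=I$ and $PF+Q_0H=O$ from $A_1A_1^{-1}=I$, identities that hold whatever $E,F,G,H$ are. This buys robustness that actually matters: the paper's refined form of $B_1$ fails in general --- for the path $v_1v_2\cdots v_6$ with unit weights, $\M_1$ the perfect matching and $\M_2=\{v_3v_4\}$, the entry of $B_1$ in position $(C_{v_4},R_{v_1})$ equals $-1$ and sits in a block the paper declares to be $O$ --- whereas your relations still give $E+FR_0P^{-1}=P^{-1}$ and $G+HR_0P^{-1}=O$ in that example. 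So your handling of the final step is not merely an alternative route; it supplies an argument for the $B_1AB_2=B_2$ half that the paper's displayed block form cannot support. As you say, the reduction lemma is the part to write out in full; the rest is two clean applications of the two-sided inverse relations, with the noncommutativity handled correctly by multiplying on the appropriate side.
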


\begin{proof}{Let $F_1$ and $F_2$ be the induced forests by $T$ on the vertices saturated by $\M_1$ and $\M_2$, respectively. Let $A_1$ and $A_2$ be the submatrices of $A$ such that $F_1=G(A_1)$ and $F_2=G(A_2)$.
Then $\M_1$ and $\M_2$ are perfect matchings for $F_1$ and $F_2$, respectively.
Let $|\M_1|=p$ and $|\M_2|=q$.
It turns out that, with an appropriate ordering of the vertices,
$$B_1=\left(\begin{array}{cc}A_1^{-1} & O_{p\times(m-p)} \\ O_{(n-p)\times p} & O_{(n-p)\times(m-p)} \end{array}\right)~~\hbox{and}~~
B_2=\left(\begin{array}{cc}A_2^{-1} & O_{q\times(m-q)} \\ O_{(n-q)\times q} & O_{(n-q)\times(m-q)} \end{array}\right).$$
Note that $A_2^{-1}$ is also a submatrix of $A_1^{-1}$, so $B_1$ is in fact of the form
$$B_1=\left(\begin{array}{c|c|c}A_2^{-1} & O & O \\ \hline * & * & O \\ \hline O & O & O \end{array}\right).$$
Then
$$AB_1=\left(\begin{array}{cc}I_{p\times p} & O_{p\times(m-p)} \\ * & O_{(m-p)\times(m-p)} \end{array}\right).$$
It follows that
$$B_2AB_1=\left(\begin{array}{c|c|c}A_2^{-1} & O & O \\ \hline O & O & O \\ \hline O & O & O \end{array}\right)=B_2.$$
The equality $B_1AB_2=B_2$ is proved similarly.}\end{proof}

With the same proof as the theorem above, we can prove even a more general statement as follows.

\begin{thm} Let $A$ be an $m\times n$ matrix such that $T=G(A)$ is a tree and let $\M_1$ and $\M_2$ be two matchings in $T$.
 If $B_1$ and $B_2$ be $n\times m$ path matrices of $T$ with respect to $\M_1$ and $\M_2$, respectively, then
$$B_1AB_2=B_2AB_1=C,$$
where $C$ is the path matrix of $T$ with respect to $\M_1\cap\M_2$.
\end{thm}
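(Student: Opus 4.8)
The plan is to imitate the proof of Theorem~\ref{bab} almost verbatim, with the intersection matching $\M_0:=\M_1\cap\M_2$ playing the role that the smaller matching played there. The first thing to check is that $\M_0$ really is a matching: an edge belongs to $\M_0$ exactly when it belongs to both $\M_i$, and any subset of a matching is a matching, so the path matrix $B_0$ of $\M_0$ is defined and $B_0$ is the matrix $C$ named in the statement. Because $\M_0\subseteq\M_1$ and $\M_0\subseteq\M_2$, I can apply Theorem~\ref{bab} to each inclusion and record the auxiliary identities $B_1AB_0=B_0AB_1=B_0$ and $B_2AB_0=B_0AB_2=B_0$; these also confirm that the claim specializes correctly, since in the nested case $\M_2\subseteq\M_1$ one has $\M_0=\M_2$ and the statement collapses to Theorem~\ref{bab}.

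Next I would reconstruct the block picture. Let $F_0,F_1,F_2$ be the forests induced by $T$ on the vertices saturated by $\M_0,\M_1,\M_2$, and let $A_0,A_1,A_2$ be the corresponding submatrices of $A$, so that $\M_i$ is a perfect matching of $F_i$ and $B_i$ is the embedding of $A_i^{-1}$ into the $n\times m$ frame, zero outside the saturated block, exactly as in the proof of Theorem~\ref{bab}. Feeding the inclusion $\M_0\subseteq\M_1$ into that same argument shows $A_0^{-1}$ is a submatrix of $A_1^{-1}$, and likewise $A_0^{-1}$ is a submatrix of $A_2^{-1}$; thus both $A_1^{-1}$ and $A_2^{-1}$ contain the common corner $A_0^{-1}$. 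I would then try to order the vertices so that the $\M_0$-saturated ones come first and both $B_1$ and $B_2$ acquire the shape
$$\begin{pmatrix}A_0^{-1}&O&O\\ *&*&O\\ O&O&O\end{pmatrix},$$
after which computing $AB_1$ and $AB_2$ (each an identity on its saturated rows, with the remaining rows supported in the saturated columns) and multiplying block-wise would leave the corner $A_0^{-1}$ intact while the starred blocks are annihilated against the zero blocks of the other factor, yielding $B_0=C$ for both $B_1AB_2$ and $B_2AB_1$.

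The step I expect to be the genuine obstacle is the one hidden in the phrase ``order the vertices so that both $B_1$ and $B_2$ acquire this shape.'' The ordering that exposes $A_0^{-1}$ in the corner of $B_1$ is dictated by $F_1$, the one for $B_2$ by $F_2$, and these two orderings agree only on $F_0$: a vertex can be saturated by both $\M_1$ and $\M_2$ through different edges, so $F_1\cap F_2$ may be strictly larger than $F_0$ yet contribute nothing to $\M_0$. In the triple product such a vertex can carry a contribution from the $\M_1$-block to the $\M_2$-block along an edge of $A$ that bridges them, and one must prove that every such cross contribution cancels. This is exactly where the tree hypothesis must be used essentially: since $T$ is acyclic, $\M_1\triangle\M_2$ is a disjoint union of alternating paths (no even alternating cycles arise), and the natural attempt is to pair off the offending alternating walks along each such path so that their signed weights $\epsilon(P)w(P)$ cancel, collapsing the sum to the $\M_0$ contribution. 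Turning this pairing into an actual cancellation, rather than something merely suggested by the block diagram, is the crux; it is precisely the place where the nesting assumption $\M_2\subseteq\M_1$ was doing real work in Theorem~\ref{bab}, and controlling the bridging $A$-edges in its absence is the delicate case on which the whole argument rests.
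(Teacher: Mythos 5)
You are right to be suspicious of the step you single out, but your proposal stops exactly where a proof would have to begin: you set up the block decomposition, observe that the vertex orderings adapted to $\M_1$ and to $\M_2$ need not be compatible once some vertex is saturated by both matchings through different edges, and then leave the required cancellation of the resulting cross terms --- which you yourself call the crux --- unestablished. The paper is of no help here: its entire ``proof'' of this statement is the assertion that the argument of Theorem~\ref{bab} carries over unchanged, and your analysis of the incompatible orderings is precisely the reason it does not.

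Moreover, the gap cannot be closed, because the statement is false as it stands. Take $T=K_{1,2}$ with center $C_1$ and leaves $R_1,R_2$, so $A=\begin{pmatrix}a\\ b\end{pmatrix}$ with $a,b\neq 0$, and let $\M_1=\{R_1C_1\}$ and $\M_2=\{R_2C_1\}$; then $\M_1\cap\M_2=\emptyset$, so $C=O$. The only alternating path for $\M_1$ is its single matched edge, giving $B_1=(a^{-1},0)$, and likewise $B_2=(0,b^{-1})$. Then $B_1A=(1)=B_2A$, hence $B_1AB_2=B_2$ while $B_2AB_1=B_1$: the two triple products are not even equal to each other, let alone equal to $O$. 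The offending vertex $C_1$ is exactly the doubly saturated vertex you worried about, and the cross contribution it carries does not cancel. So the correct conclusion from your observation is not that a cleverer pairing argument is needed, but that the theorem requires an extra hypothesis --- e.g.\ that no vertex is matched by $\M_1$ and $\M_2$ along different edges (equivalently, that $\M_1\cup\M_2$ is again a matching), which holds automatically in the nested case $\M_2\subseteq\M_1$ of Theorem~\ref{bab} and which restores the compatibility of the two block orderings on which the paper's argument depends.
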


Recall that the matrix $B$ is called a {\em $2$-inverse} (or an  {\em outer inverse}) of the matrix
$A$ if $BAB = B$ (see, for example, \cite{ben}). The next result is an immediate
consequence of Theorem~\ref{bab}.

\begin{cor} Let $A$ be a matrix such that $T=G(A)$ is a tree and let $\M$
   be a matching in $T.$ If $B$ is the path matrix of $T$ with respect to $\M,$
   then $B$ is an outer inverse of $A.$
\end{cor}

\section*{Acknowledgments}
The first author acknowledges support from the JC Bose Fellowship, Department
of Science and Technology, Government of India, and also thanks IPM, Tehran,
for hospitality during a visit when this research was carried out.
The research of the second author was in part supported by a grant from IPM (No. 91050114).

\end{document}